\newtheorem{superclass}{superclass}
\newtheorem{definition}[superclass]{Definition}
\newtheorem{lemma}[superclass]{Lemma}
\newtheorem{theorem}[superclass]{Theorem}
\newtheorem{example}[superclass]{Example}
\newcommand{\R}{\mathbbm{R}}
\newcommand{\eps}{\varepsilon}
\newcommand{\mc}{\mathcal}
\DeclareMathOperator{\dist}{dist}
\DeclareMathOperator{\ext}{ext}
\DeclareMathOperator{\co}{conv}
\begin{document}

\title{Explicit Formula for Preimages of Relaxed One-Sided Lipschitz Mappings
with Negative Lipschitz Constants}
\author{Andrew S.~Eberhard, Boris S.~Mordukhovich and Janosch Rieger}
\date{\today}

\maketitle

\begin{abstract}
This paper addresses Lipschitzian stability issues that play an important role in both theoretical and numerical aspects of variational analysis, optimization, and their applications. We particularly concentrate on the so-called relaxed one-sided Lipschitz property of set-valued mappings with negative Lipschitz constants. This property has been much less investigated than more conventional Lipschitzian behavior while being well recognized in a variety of applications.
Recent work has revealed that set-valued mappings satisfying the relaxed one-sided Lipschitz condition with negative Lipschitz constant possess a localization property that is stronger than uniform metric regularity. The present paper complements this fact by providing a characterization not only of one specific single point of a preimage, but of entire preimages of such mappings. Developing a geometric approach, we derive an explicit formula to calculate preimages of relaxed one-sided Lipschitz mappings between finite-dimensional spaces and obtain a further specification of this formula via extreme points of image sets.
\end{abstract}

\noindent\textbf{Keywords:} Well-posedness in variational analysis, 
multivalued mapping, relaxed one-sided Lipschitz property, preimages, 
explicit formula\\
\noindent\textbf{MSC codes:} 49J53, 47H04

\section{Introduction}

The importance of various Lipschitzian stability and related properties of set-valued/multivalued mappings (multifunctions) has been highly recognized in numerous aspects of variational analysis, optimization, optimal control, and various of applications; see, e.g., the books \cite{rw,v,bz,Mor1,dr,m18} and the references therein. In this paper we consider the {\em relaxed one-sided Lipschitz property} which was introduced by Tzanko Donchev \cite{Donchev:96} and was used by him and other researchers as a stability criterion for ordinary differential inclusions. The relaxed one-sided Lipschitz constant of a multivalued vector field measures
the {\em rate of contraction or expansion} of the induced multivalued flow. Namely, a negative constant corresponds to contraction of mappings, while a positive constant corresponds to their expansion. Some modifications and further applications of the relaxed one-sided Lipschitz property of multifunctions can be found in, e.g., \cite{lv,dfm,rieger14} among other publications.

In this paper we entirely focus on the relaxed one-sided Lipschitz property of mappings with {\em negative} Lipschitz constants. Surjectivity results for such mappings are established in \cite{Donchev:02} and \cite{Donchev:04}. It is shown in \cite{Beyn:Rieger:10} that such mappings are (uniformly) {\em metrically regular}; see, e.g., \cite{rw,Mor1} for more discussions on this and related properties. The latter result has been recently refined in \cite{Beyn:Rieger:14} (see Theorem~\ref{solvability:theorem} below) and then further extended to infinite-dimensional Hilbert spaces and to the case of Gelfand triples in \cite{Rieger:Weth:16}.

To the best of our knowledge, descriptions of preimages/inverse images of sets (and in particular of singletons) under mappings satisfying the relaxed one-sided Lipschitz condition is a completely open area, while such objects play an important role in many aspects of variational analysis and optimization being significantly investigated for mappings with conventional Lipschitzian properties. The present paper partly albeit significantly fills this gap by deriving an {\em explicit characterization} of preimages of singletons under upper semicontinuous relaxed one-sided Lipschitz mappings with negative Lipschitz constant with providing further specifications via extreme points.

The paper is organized as follows. Section~\ref{prel} contains some preliminaries widely used below. In Section~\ref{basic:formula} we develop a new preimage formula for relaxed one-sided Lipschitz multifunctions, while Section~\ref{simplification} presents its major specification by using extreme points of images. The concluding Section~\ref{conc} discusses some challenging unsolved problems in this area.

\section{Preliminaries}\label{prel}

We equip $\R^d$ with the Euclidean norm $\|\cdot\|:\R^d\to\R_+$ and the canonical inner product $\langle\cdot,\cdot\rangle:\R^d\times\R^d\to\R$.
For any $z\in\R^d$ and an nonempty set $X\subset\R^d$ define the {\em distance function}
\[\dist(z,X)=\inf_{x\in X}\|x-z\|,\]
and for any $X\subset\R^d$ and $r\in\R_+$ we set
\[B_r(X)=B(X,r)=\big\{x\in\R^d:\,\dist(x,X)\le r\big\}.\]
The collection of all the nonempty {\em convex and compact} subsets of $\R^d$ is labeled as $\mc{CC}(\R^d)$. For any $\bar{y}\in\R^d$ and any mapping $F:\R^d\to\mc{CC}(\R^d)$ consider the {\em preimage set} given by
\[F^{-1}(\bar{y})=\big\{x\in\R^d:\,\bar{y}\in F(x)\big\}.\]
Consider further the {\em graph} of $F$ given by
$$
{\rm gph}\,F=\big\{(x,y)\in\R^d\times\R^d:\,y\in F(x)\big\}
$$

The main property of our study in this paper is defined as follows.

\begin{definition}[relaxed one-sided Lipschitz property]
Consider a set-valued mapping $F:\R^d\to\mc{CC}(\R^d)$ and fix a constant $\ell\in\R$. We say that $F$ is {\sc $\ell$-relaxed one-sided Lipschitz} if for any $x,x'\in\R^d$ and any $y\in F(x)$ there exists $y'\in F(x')$ such that
\[\langle y'-y,x'-x\rangle\le\ell\|x'-x\|^2.\]
\end{definition}

Recall that a set-valued mapping $F$ is {\em upper semicontinuous} if for every $x\in\R^d$ and $\eps>0$ there exists $\delta>0$ such that
$$
F(x')\subset B_\eps\big(F(x)\big)\;\mbox{ whenever }\;x'\in B_\delta(x).
$$

The following result is taken from \cite[Theorem~3.1]{Beyn:Rieger:14}.

\begin{theorem}{\rm(solvability property of relaxed one-sided Lipschitz mappings)}\label{solvability:theorem}
Let $F:\R^d\to\mc{CC}(\R^d)$ be an upper semicontinuous and $\ell$-relaxed one-sided Lipschitz mapping with Lipschitz constant $\ell<0$, let
$\bar{y}\in\R^d$, and let $x,y\in\R^d$ with $y\in F(x)$. 
Then there exists $\bar x\in\R^d$ with $(\bar x,\bar y)\in{\rm gph}\,F$ 
and such that
$$
\bar x\in B\Big(x+\tfrac{1}{2\ell}(\bar{y}-y),\tfrac{1}{2|\ell|}\|\bar y-y\|\Big).
$$
\end{theorem}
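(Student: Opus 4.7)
The plan begins by reducing the geometric ball condition to an analytic inequality. Expanding $\|\bar x - x - \tfrac{1}{2\ell}(\bar y - y)\|^2 \le \tfrac{1}{4\ell^2}\|\bar y - y\|^2$ and cancelling the common quadratic term yields
\[\bar x \in B\Big(x+\tfrac{1}{2\ell}(\bar y - y),\,\tfrac{1}{2|\ell|}\|\bar y - y\|\Big)\;\iff\;\langle \bar y - y,\,\bar x - x\rangle \le \ell\,\|\bar x - x\|^2.\]
So the theorem asks for $\bar x \in F^{-1}(\bar y)$ such that the pair $((x,y),(\bar x,\bar y))$ itself satisfies the defining ROSL inequality anchored at $(x,y)$. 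This reformulation guides the whole strategy: we need both a zero of the map $x' \mapsto F(x')-\bar y$ and compatibility with the original anchor.

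To construct such $\bar x$, I would set up an Euler-type discrete scheme driven by ROSL. Set $x_0=x$, $y_0=y$, fix a small step $h>0$, and iterate
\[x_{k+1}=x_k+\tfrac{h}{\ell}(\bar y - y_k),\]
then invoke ROSL at $(x_k,y_k)$ with $x'=x_{k+1}$ to select $y_{k+1}\in F(x_{k+1})$ with $\langle y_{k+1}-y_k,x_{k+1}-x_k\rangle\le \ell\|x_{k+1}-x_k\|^2$. Substituting the step rule yields the key estimate $\langle y_{k+1}-y_k,\bar y - y_k\rangle\ge h\,\|\bar y - y_k\|^2$, which pushes $y_k$ toward $\bar y$. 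Passing to the limit $h\to 0$ via Arzel\`a--Ascoli---using upper semicontinuity and compactness of the values to bound and equicontrol the polygons---should produce an absolutely continuous trajectory $x(\cdot):[0,\infty)\to\R^d$ solving the differential inclusion $\dot x(t)\in \tfrac{1}{\ell}(\bar y - F(x(t)))$, together with a measurable selection $y(t)\in F(x(t))$ realizing $\ell\dot x(t)=\bar y - y(t)$. The pointwise estimate then forces $\|\bar y - y(t)\|\to 0$, and closedness of $\mathrm{gph}\,F$ produces $\bar x = \lim_{t\to\infty}x(t)\in F^{-1}(\bar y)$.

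The main obstacle is showing that $\bar x$ lands in the \emph{specific} small ball $B$, not merely in the larger ball $B(x,\tfrac{1}{|\ell|}\|\bar y - y\|)$ that metric regularity alone would provide. I would prove this by exhibiting an invariant: let $V(t)=\|x(t)-x-\tfrac{1}{2\ell}(\bar y - y)\|^2$ and note $V(0)=\tfrac{1}{4\ell^2}\|\bar y - y\|^2$. Differentiating along the trajectory and substituting $\ell\dot x(t)=\bar y - y(t)$ leaves an expression involving $\langle y(t)-y,\,x(t)-x\rangle$, and applying ROSL \emph{a second time}---between the fixed anchor $(x,y)$ and the moving pair $(x(t),y(t))$---should render $\dot V(t)\le 0$. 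The delicate technical point is that the Euler selection $y_k$ satisfies ROSL only against the previous step, not against the original anchor, so one must either re-select $y_k$ at each step (combining the two ROSL constraints via convexity of $F(x_k)$) or conduct the invariance argument directly at the continuous-time limit, where both ROSL applications are simultaneously available. Once $V(t)\le V(0)$ is established, passage $t\to\infty$ places $\bar x$ inside $B$, completing the argument.
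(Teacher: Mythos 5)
First, note that the paper does not prove Theorem~\ref{solvability:theorem}: it is imported verbatim from \cite[Theorem~3.1]{Beyn:Rieger:14}, so you are reconstructing a result whose proof occupies a separate paper. Your opening reduction is correct and is exactly the computation used in Lemma~\ref{hue}: membership of $\bar x$ in the ball is equivalent to $\langle\bar y-y,\bar x-x\rangle\le\ell\|\bar x-x\|^2$. Your overall plan -- Euler iterates for the inclusion $\dot x\in\tfrac{1}{\ell}(\bar y-F(x))$, passage to a limit trajectory, and a Lyapunov/invariance argument for the localization -- is in the spirit of Donchev's differential-inclusion approach to surjectivity \cite{Donchev:04} and of the iteration in \cite{Beyn:Rieger:14}. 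But the two steps that actually carry the theorem are asserted rather than proved, and they are precisely the nontrivial content of the cited works.

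Concretely: (i) the estimate $\langle y_{k+1}-y_k,\bar y-y_k\rangle\ge h\|\bar y-y_k\|^2$ controls only the component of $y_{k+1}-y_k$ along $\bar y-y_k$; since neither the relaxed one-sided Lipschitz condition nor upper semicontinuity bounds $\|y_{k+1}-y_k\|$, it does not force the residuals $\|\bar y-y_k\|$ to decrease, and it does not survive the limit $h\to 0$, because the limit object is only a measurable selection $y(t)\in F(x(t))$ (obtained from weak convergence of the derivatives) which inherits no pointwise inequality from the discrete selections. Thus the claim ``$\|\bar y-y(t)\|\to 0$'', i.e.\ that the trajectory approaches $F^{-1}(\bar y)$ at all, is exactly Donchev's surjectivity theorem and is left unproved; so are global existence, boundedness, and convergence of $x(t)$ as $t\to\infty$ (as opposed to subsequential limits). (ii) For $\dot V\le 0$ you need the driving selection $y(t)$ to satisfy the Lipschitz-type inequality against the fixed anchor $(x,y)$, whereas the definition only guarantees that \emph{some} element of $F(x(t))$ does. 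Neither of your proposed repairs closes this: if $y_a\in F(x_{k+1})$ satisfies the constraint relative to $(x_k,y_k)$ and $y_b\in F(x_{k+1})$ the constraint relative to $(x,y)$, a convex combination $\lambda y_a+(1-\lambda)y_b$ in general satisfies neither inequality (each constraint is verified by only one of the two terms), and the subset of $F(x_{k+1})$ satisfying both constraints simultaneously may be empty, since the definition provides each constraint only for its own anchor pair; conducting the argument ``at the continuous-time limit'' fails for the reason in (i). Note also that the natural nested-ball shortcut would require a geometric decay such as $\|\bar y-y_{k+1}\|\le\tfrac12\|\bar y-y_k\|$, which the hypotheses do not supply. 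So your outline is a sensible roadmap consistent with the literature, but the convergence and invariance steps -- the heart of the theorem -- remain genuine gaps; filling them is essentially the content of \cite{Beyn:Rieger:14} and of its Hilbert-space extension \cite{Rieger:Weth:16}.
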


Note that the above statement is stronger than the aforementioned metric regularity, since it does not specify a ball centered at $x$ in which a solution $\bar{x}$ of $\bar{y}\in F(\bar{x})$ can be found, but a smaller ball with $x$ on its {\em boundary}.

\section{Explicit formula for preimages} \label{basic:formula}

Throughout this paper we fix the number $\ell<0$ as a given constant/modulus of the relaxed one-sided Lipschitz mapping $F:\R^d\to\mc{CC}(\R^d)$ under consideration, and also fix a pair $(\bar x,\bar y)\in{\rm gph}\,F$. Our goal is to derive a precise formula of representing the preimages of $F$ via the set-valued mapping defined by
\[x\mapsto G_F(x,\bar{y})=\bigcup_{y\in F(x)}B\Big(x+\frac{1}{2\ell}(\bar{y}-y),\frac{1}{2|\ell|}\|\bar{y}-y\|\Big).\]

We split the derivation of this formula into the following two lemmas. The first one requires only the relaxed one-sided Lipschitz property, while not upper semicontinuity of the mapping $F$.

\begin{lemma}{\rm(upper estimate of preimages)}\label{hue} Fix $\bar{y}\in\R^d$ and consider an $\ell$-relaxed one-sided Lipschitz mapping $F:\R^d\to\mc{CC}(\R^d)$ with $\ell<0$. Then we have the inclusion/upper estimate
\[F^{-1}(\bar{y})\subset G_F(x,\bar{y})\;\mbox{ for all }\;x\in\R^d.\]
\end{lemma}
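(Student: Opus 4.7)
The plan is to regard the ball in the definition of $G_F$ as exactly the level set of the ROSL quadratic inequality, read in the variable of the preimage point. Since upper semicontinuity is not available in this lemma, no existence-type argument is at my disposal; the only viable move is to \emph{start} from an assumed $x^*\in F^{-1}(\bar y)$, feed the pair $(x^*,\bar y)\in{\rm gph}\,F$ into the ROSL definition together with an arbitrary $x\in\R^d$, and then check that the $y\in F(x)$ thereby produced is exactly the index of a ball in the union defining $G_F(x,\bar y)$ that already contains $x^*$.

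Concretely, I would fix $x\in\R^d$ and $x^*\in F^{-1}(\bar y)$, apply the ROSL property at $(x^*,\bar y)$ with auxiliary point $x$ to obtain some $y\in F(x)$ satisfying
\[\langle y-\bar y,\,x-x^*\rangle\le\ell\,\|x-x^*\|^2,\]
and then verify that this particular $y$ witnesses $x^*\in B\bigl(x+\tfrac{1}{2\ell}(\bar y-y),\,\tfrac{1}{2|\ell|}\|\bar y-y\|\bigr)$. The verification is the standard ``complete the square'' manipulation: expand
\[\Big\|x^*-x-\tfrac{1}{2\ell}(\bar y-y)\Big\|^2=\|x^*-x\|^2-\tfrac{1}{\ell}\langle x^*-x,\,\bar y-y\rangle+\tfrac{1}{4\ell^2}\|\bar y-y\|^2.\]
The constant term cancels against the squared radius $(2|\ell|)^{-2}\|\bar y-y\|^2$, so the ball inclusion reduces to
\[\|x-x^*\|^2\le\tfrac{1}{\ell}\langle x-x^*,\,y-\bar y\rangle,\]
which is precisely the ROSL inequality after division by $\ell<0$ (which reverses the direction).

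The main obstacle is essentially bookkeeping: one must handle the sign of $\ell$ carefully when dividing, but the lemma is, at bottom, the algebraic identity that the ball $B\bigl(x+\tfrac{1}{2\ell}(\bar y-y),\,\tfrac{1}{2|\ell|}\|\bar y-y\|\bigr)$ is exactly the set of points $x^*$ for which the ROSL inequality with data $(x,y)$ and $(x^*,\bar y)$ holds. This also explains why upper semicontinuity---crucial in Theorem~\ref{solvability:theorem} to conjure a preimage point from nothing---can be dispensed with here: the preimage point is already given, and one only needs to locate it.
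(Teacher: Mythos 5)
Your argument is correct and coincides with the paper's own proof: both apply the ROSL property at the preimage point $x^*$ (with $\bar y\in F(x^*)$) against an arbitrary $x$ to produce $y\in F(x)$, and then the complete-the-square identity shows $x^*$ lies in the ball $B\bigl(x+\tfrac{1}{2\ell}(\bar y-y),\tfrac{1}{2|\ell|}\|\bar y-y\|\bigr)$, so $x^*\in G_F(x,\bar y)$. The only cosmetic difference is that the paper bounds the squared distance directly by the ROSL inequality instead of dividing by $\ell$, which is an equivalent rearrangement.
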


\begin{proof}
Fix any $x\in\R^d$ and pick $\bar{x}\in F^{-1}(\bar{y})$. The relaxed one-sided Lipschitz property of $F$ allows us to find $y\in F(x)$ such that
\[\langle\bar{y}-y,\bar{x}-x\rangle\le\ell\|\bar{x}-x\|^2.\]
This implies the relationships
\begin{align*}
&\Big\|\bar{x}-(x+\frac{1}{2\ell}(\bar{y}-y))\Big\|^2=\Big\|(\bar{x}-x)-\frac{1}{2\ell}(\bar{y}-y)\Big\|^2\\
&=\|\bar{x}-x\|^2-\Big\langle\frac{1}{\ell}(\bar{y}-y),\bar{x}-x\Big\rangle+\Big\|\frac{1}{2\ell}(\bar{y}-y)\Big\|^2\le\Big\|\frac{1}{2\ell}(\bar{y}-y)\Big\|^2,
\end{align*}
which readily give us the inclusion
\[\bar{x}\in B\Big(x+\frac{1}{2\ell}(\bar{y}-y),\frac{1}{2|\ell|}\|\bar{y}-y\|\Big)\]
and thus verify the statement of the lemma.
\end{proof}

The reverse estimate requires only upper semicontinuity of $F$, while not its relaxed one-sided Lipschitz property.

\begin{lemma}{\rm(lower estimate of preimages)}\label{hott}
Let $F:\R^d\to\mc{CC}(\R^d)$ be upper semicontinuous, let $\ell<0$, and let $\bar{y}\in\R^d$.
If $z\notin F^{-1}(\bar{y})$, then there exists $x\in\R^d$ for which we get
\begin{equation}\label{lcni}
z\notin G_F(x,\bar{y}).
\end{equation}
\end{lemma}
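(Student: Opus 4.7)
The plan is to translate the conclusion \eqref{lcni} into a single inner product inequality, extract a separating direction from the hypothesis $\bar y\notin F(z)$, and then use upper semicontinuity to control all of $F(x)$ along that direction for $x$ near $z$.

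First, I would reformulate the condition $z\notin G_F(x,\bar y)$ equivalently as
\[\langle \bar y - y,\,z-x\rangle > \ell\|z-x\|^2 \quad\mbox{for every }y\in F(x).\]
This follows from expanding $\|z-(x+\tfrac{1}{2\ell}(\bar y-y))\|^2$, cancelling the common term $\tfrac{1}{4\ell^2}\|\bar y-y\|^2$ on the two sides of the ball condition, and multiplying through by the negative number $\ell$. This moves the problem away from a union of balls and into a linear inequality, where hyperplane separation becomes the natural tool.

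Next, since $\bar y\notin F(z)$ and $F(z)\in\mc{CC}(\R^d)$, strict separation yields a unit vector $v\in\R^d$ and a gap
\[c:=\langle v,\bar y\rangle-\max_{y\in F(z)}\langle v,y\rangle>0,\]
so that $\langle v,\bar y-y\rangle\ge c$ for all $y\in F(z)$. By upper semicontinuity of $F$ at $z$, I would choose $\delta>0$ with $F(z')\subset B_{c/2}(F(z))$ whenever $z'\in B_\delta(z)$. Then for any $y\in F(z')$ there exists $y_0\in F(z)$ with $\|y-y_0\|\le c/2$, and Cauchy--Schwarz upgrades the separation to a uniform bound $\langle v,\bar y-y\rangle\ge c/2$ valid for \emph{all} $y\in F(z')$.

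Finally, I would set $x:=z-tv$ for any $t\in(0,\delta]$, so that $z-x=tv$ and $\|z-x\|=t$. For every $y\in F(x)$ the previous step yields
\[\langle \bar y-y,\,z-x\rangle=t\,\langle v,\bar y-y\rangle\ge tc/2>0>\ell t^{2}=\ell\|z-x\|^{2},\]
which by the reformulation in the first step is exactly the negation of $z\in G_F(x,\bar y)$, establishing \eqref{lcni}. The only delicate point is bookkeeping with the sign of $\ell$ in the reformulation and placing $x$ on the correct side of $z$ (in the direction $-v$), so that the separating inequality and the strictly negative quantity $\ell\|z-x\|^{2}$ cooperate rather than cancel; once these signs are lined up, the argument is routine and no use is made of the relaxed one-sided Lipschitz property, as advertised.
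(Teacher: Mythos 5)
Your proof is correct and follows essentially the same route as the paper: strict separation of $\bar y$ from the compact convex set $F(z)$, upper semicontinuity to retain a uniform separation gap for $F(x)$ with $x$ a small shift of $z$ along the separating direction, and then the conclusion that $z$ lies outside every ball in $G_F(x,\bar y)$. The only cosmetic differences are that you first rewrite the ball condition as the inner-product inequality $\langle\bar y-y,z-x\rangle\le\ell\|z-x\|^2$ (the paper instead bounds $\langle\bar y-y,\cdot\rangle$ over each ball directly) and that you spell out the Cauchy--Schwarz step the paper leaves implicit.
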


\begin{proof}
Having $\bar{y}\notin F(z)$ and $F(z)\in\mc{CC}(\R^d)$, we find by the classical separation theorem such numbers $\eps>0$ and $v\in\R^d$ that
\begin{equation*}
\langle v,\bar{y}\rangle\le\langle v,y\rangle-2\eps\;\mbox{ for all }\;y\in F(z).
\end{equation*}
The upper semicontinuity of $F$ ensures the existence of $\delta>0$ with
\begin{equation}\label{separate}
\langle v,\bar{y}\rangle\le\langle v,y\rangle-\eps\;\mbox{ for all }\;y\in F(z+\delta v).
\end{equation}
Set $x:=z+\delta v$ and fix an arbitrary vector $y\in F(x)$. Using \eqref{separate} tells us that for every
$\xi\in\R^d$ with $\|\xi\|\le\frac{1}{2|\ell|}\|\bar{y}-y\|$,
we have
\begin{align*}
&\Big\langle\bar{y}-y,x+\frac{1}{2\ell}(\bar{y}-y)+\xi\Big\rangle
=\Big\langle\bar{y}-y,z+\delta v+\frac{1}{2\ell}(\bar{y}-y)+\xi\Big\rangle\\
&=\langle\bar{y}-y,z\rangle+\delta\Big\langle\bar{y}-y,v\Big\rangle
+\frac{1}{2\ell}\|\bar{y}-y\|^2+\langle\bar{y}-y,\xi\rangle
\le\langle\bar{y}-y,z\rangle-\delta\eps,
\end{align*}
and thus $z\ne x+\frac{1}{2\ell}(\bar{y}-y)+\xi$. This verifies that
\[z\notin B\Big(x+\frac{1}{2\ell}(\bar{y}-y),\frac{1}{2|\ell|}\|\bar{y}-y\|\Big).\]
Since $y\in F(x)$ was chosen arbitrarily, it gives us the conclusion in \eqref{lcni}.
\end{proof}

Combining Lemmas~\ref{hue} and \ref{hott}, we arrive at the aforementioned formula.

\begin{theorem}{\rm(precise formula for computing preimages)}\label{main:thm}
For any upper semicontinuous and $\ell$-relaxed one-sided Lipschitz 
mapping $F:\R^d\to\mc{CC}(\R^d)$ with $l<0$ and  for any $\bar{y}\in\R^d$ we have the preimage formula
\[F^{-1}(\bar{y})=\bigcap_{x\in\R^d}G_F(x,\bar{y}).\]
\end{theorem}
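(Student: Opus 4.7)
The plan is to derive the formula as an immediate corollary of Lemmas~\ref{hue} and \ref{hott}, since together they supply the two complementary inclusions and the hypotheses of the theorem are precisely the union of the hypotheses of the two lemmas.

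First I would handle the inclusion $F^{-1}(\bar y)\subset\bigcap_{x\in\R^d}G_F(x,\bar y)$. Lemma~\ref{hue} asserts that $F^{-1}(\bar y)\subset G_F(x,\bar y)$ holds for every single $x\in\R^d$, and the left-hand side is independent of $x$. Intersecting over $x$ on the right therefore preserves the inclusion, giving the desired containment. Note that at this step only the relaxed one-sided Lipschitz property is used; upper semicontinuity is irrelevant.

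For the reverse inclusion I would argue by contrapositive. Fix $z\notin F^{-1}(\bar y)$. Lemma~\ref{hott} then produces a specific $x_0\in\R^d$ with $z\notin G_F(x_0,\bar y)$, so $z$ cannot belong to $\bigcap_{x\in\R^d}G_F(x,\bar y)$. Equivalently, any point in the intersection must lie in $F^{-1}(\bar y)$. This direction relies only on upper semicontinuity of $F$, not on the Lipschitz hypothesis.

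There is essentially no obstacle at this stage: the entire analytic content has already been absorbed into the two lemmas, and the theorem serves as the packaging statement. The one point worth making explicit in the write-up is that the two lemmas have disjoint hypothesis requirements, so the combined assumptions of Theorem~\ref{main:thm} cleanly decouple the contribution of the relaxed one-sided Lipschitz property (which drives the upper estimate) from that of upper semicontinuity (which drives the lower estimate).
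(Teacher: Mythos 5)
Your argument is exactly the one the paper uses: the theorem is obtained by combining Lemma~\ref{hue} (which gives $F^{-1}(\bar y)\subset G_F(x,\bar y)$ for every $x$, hence the inclusion into the intersection) with the contrapositive use of Lemma~\ref{hott} for the reverse inclusion. The proposal is correct and coincides with the paper's proof, including the observation about which hypothesis drives which inclusion.
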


The following example shows that it is not necessary to take the intersection {\em over all of} $\R^d$ in the above formula, while it also demonstrates that there are {\em limits} to how far this result can potentially be improved.

\begin{example}\label{theoneandonly}
Consider the set-valued mapping $F:\R\to\mc{CC}(\R^d)$ given by
\[F(x)=\begin{cases}[1,2]-x,&x<0,\\ [-2,2],&x=0,\\ [-2,-1]-x,&x>0,\end{cases}\]
which is upper semicontinuous and satisfies the relaxed one-sided Lipschitz condition with the negative constant $\ell=-1$. We get $F^{-1}(0)=\{0\}$ and
\[G_F(x,0)=\begin{cases}[x,2],&x<0,\\ [-2,2],&x=0,\\ [-2,x],&x>0.\end{cases}\]
This readily yields the equality
\begin{equation}\label{unclear}
F^{-1}(0)=\cap_{x\in B_\eps(F^{-1}(0))}G_F(x,0)\;\mbox{ for all }\;\eps>0,
\end{equation}
At the same time we observe that
\begin{equation}\label{completely:unclear}
F^{-1}(0)\neq\cap_{x\in F^{-1}(0)}G_F(x,0).
\end{equation}
\end{example}

It is currently {\em unclear} whether formula \eqref{unclear} holds in general, and also whether an equality can be obtained in \eqref{completely:unclear} if assuming in addition that $F$ is continuous. The question on which vectors $y\in F(x)$ are really needed to compute $G_F(x,\bar{y})$ is addressed in the next section.

\section{Specification of the mapping $G_F$ in the preimage formula}\label{simplification}

Here we establish an effective specification of the mapping $G_F$ in the above preimage formula by using {\em extreme points} of the image sets. Given a compact and convex set $C$ in finite dimensions, the collection of all its extreme points is denoted by $\ext(C)$.

The proof of the main result of this section is based on the following four lemmas. The first one is a classical result of convex analysis, which can found, e.g., in \cite[Corollary~18.5.1]{Rockafellar:70}.

\begin{lemma}{\rm(existence of extreme points)}\label{ext:nonempty} If $C\in\mc{CC}(\R^d)$, then $\ext(C)\ne\emptyset$.
\end{lemma}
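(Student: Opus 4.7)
The plan is to produce an extreme point constructively by maximizing a strictly convex continuous potential over $C$. Since $C$ is nonempty and compact and the map $x\mapsto\|x\|^2$ is continuous, it attains its maximum on $C$ at some point $x^*\in C$.

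I would then verify $x^*\in\ext(C)$. Suppose toward a contradiction that $x^*=\tfrac{1}{2}(x_1+x_2)$ for some distinct $x_1,x_2\in C$. The parallelogram identity yields
\[\|x_1\|^2+\|x_2\|^2=2\|x^*\|^2+\tfrac{1}{2}\|x_1-x_2\|^2,\]
while maximality of $x^*$ gives $\|x_i\|^2\le\|x^*\|^2$ for $i=1,2$. Combining these bounds forces $\tfrac{1}{2}\|x_1-x_2\|^2\le 0$, contradicting $x_1\ne x_2$. Hence no nontrivial convex decomposition of $x^*$ is possible, so $x^*\in\ext(C)$ and $\ext(C)\ne\emptyset$.

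An alternative route would be induction on $\dim(\mathrm{aff}(C))$: if $C$ is a singleton it is trivially extreme; otherwise a nonzero linear functional supporting $C$ at a relative boundary point attains its maximum over $C$ on a proper exposed face of strictly lower affine dimension, and the inductive hypothesis delivers an extreme point of that face, which is also extreme in $C$. Since the lemma is classical (indeed the authors themselves cite Rockafellar for it), I do not anticipate any genuine obstacle; the only real design choice is picking a potential whose maximizer is automatically extreme, and the squared Euclidean norm is the cleanest such choice, reducing the proof to two lines of Hilbert-space identities.
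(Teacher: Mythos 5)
Your farthest-point argument is correct: the squared norm attains its maximum on the nonempty compact set $C$, and the parallelogram identity (or, equivalently, strict convexity of $\|\cdot\|^2$) shows the maximizer cannot be a midpoint of two distinct points of $C$. The only cosmetic gap is that you verify non-decomposability only for midpoints ($\lambda=\tfrac12$) and then conclude that \emph{no} nontrivial convex decomposition exists; this is fine, but deserves the one-line remark that if $x^*=\lambda z_1+(1-\lambda)z_2$ with $\lambda\in(0,1)$ and $z_1\ne z_2$, then $x^*$ is also the midpoint of the two distinct points $x^*\pm t(z_1-z_2)\in[z_1,z_2]\subset C$ for small $t>0$, so the midpoint case is in fact the general case. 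Note that the paper itself does not prove this lemma at all: it is imported as a classical fact, citing Rockafellar's Corollary 18.5.1, which is the stronger Minkowski-type statement that a compact convex set in $\R^d$ is the convex hull of its extreme points, of which nonemptiness of $\ext(C)$ is an immediate consequence. Your proof is therefore a genuinely self-contained and more elementary route: it buys independence from the cited machinery at the cost of establishing only nonemptiness rather than the full convex-hull representation (which the paper does not need anyway). Your sketched alternative by induction on the affine dimension is also sound, but the norm-maximization argument is the cleaner choice here.
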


The next lemma provides a useful representation of ball covers via extreme points of the basic set. It leads us to a better understanding of images of the mapping $G_F$ in the preimage formula of Theorem~\ref{main:thm}.

\begin{lemma}{\rm(extreme points generate ball covers)}\label{play:with:balls}
Consider a set $C\in\mc{CC}(\R^d)$ and fix a vector $x\in\R^d$. If $x\in B_{\|z\|}(z)$ for some $z\in C$, then there exists an extreme point $z^*\in\ext(C)$ such that $x\in B_{\|z^*\|}(z^*)$.
\end{lemma}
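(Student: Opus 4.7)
The plan is to convert the ball-containment condition into a single linear inequality and then push that inequality to an extreme point via the Krein--Milman theorem.

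First, I would rewrite the hypothesis. Expanding $\|x-z\|^2 \le \|z\|^2$ and cancelling $\|z\|^2$ gives the equivalence
\[ x \in B_{\|z\|}(z) \iff \langle x, z\rangle \ge \tfrac{1}{2}\|x\|^2. \]
Applying the same equivalence with $z^*$ in place of $z$, the desired conclusion reduces to: there exists $z^* \in \ext(C)$ with $\langle x, z^*\rangle \ge \tfrac{1}{2}\|x\|^2$. So the entire problem becomes: given a linear functional $\varphi(w) := \langle x, w\rangle$ and a point $z \in C$ where $\varphi(z) \ge \tfrac{1}{2}\|x\|^2$, find an extreme point of $C$ where $\varphi$ is at least as large.

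Next I would dispose of the trivial case $x = 0$, where the required inequality reads $0 \ge 0$ and any extreme point works (one exists by Lemma~\ref{ext:nonempty}). For $x \ne 0$, I would invoke the finite-dimensional Krein--Milman theorem, which gives $C = \co(\ext(C))$, so the given $z$ admits a representation $z = \sum_{i=1}^{k} \lambda_i z^*_i$ with $z^*_i \in \ext(C)$, $\lambda_i > 0$, and $\sum_i \lambda_i = 1$. Linearity of $\varphi$ yields
\[ \langle x, z\rangle \;=\; \sum_{i=1}^{k} \lambda_i \langle x, z^*_i\rangle, \]
so at least one index $i$ satisfies $\langle x, z^*_i\rangle \ge \langle x, z\rangle \ge \tfrac{1}{2}\|x\|^2$. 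Choosing $z^* := z^*_i$ and reversing the equivalence from Step~1 delivers $x \in B_{\|z^*\|}(z^*)$.

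The argument is essentially mechanical once the algebraic equivalence of the first step is spotted, so I do not expect a serious obstacle; the only care points are to handle $x=0$ separately (so that ``maximizing'' $\varphi$ is not a vacuous operation) and to make sure that the convex-combination argument legitimately selects an extreme point of $C$ itself, which is guaranteed because Krein--Milman is applied to the compact convex set $C \in \mc{CC}(\R^d)$ directly.
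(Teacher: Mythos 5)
Your proposal is correct and follows essentially the same route as the paper: both hinge on observing that the ball condition $\|x-\xi\|^2\le\|\xi\|^2$ is an affine inequality in $\xi$ (equivalently $\langle x,\xi\rangle\ge\tfrac12\|x\|^2$) and then transferring it to an extreme point of $C$. The only cosmetic difference is that the paper invokes the fact that a linear function attains its minimum over $C$ at an extreme point, while you reprove that step via the Krein--Milman representation $C=\co(\ext(C))$ and an averaging argument (your separate treatment of $x=0$ is harmless but unnecessary, since the averaging works there too).
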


\begin{proof}
For the fixed vector $x\in\R^d$, define the function $g:\R^d\to\R$ by
\[g(\xi):=\|x-\xi\|^2-\|\xi\|^2,\quad\xi\in\R^d.\]
We clearly have $x\in B_{\|\xi\|}(\xi)$ if and only if $\|x-\xi\|^2\le\|\xi\|^2$, which can be equivalently rewritten as
$g(\xi)\le 0$. Since the function
\[g(\xi)=\|x-\xi\|^2-\|\xi\|^2
=\|x\|^2-2\langle x,\xi\rangle\]
is linear in $\xi$, it attains its minimum over $C$ at an extreme point. It follows from the assumption made that there exists $z\in C$ with $x\in B_{\|z\|}(z)$. Thus
\[\min_{\xi\in\ext C}g(\xi)=\min_{\xi\in C}g(\xi)\le g(z)\le 0.\]
This ensures the existence of $z^*\in\ext C$ such that $g(z^*)\le 0$. The latter is clearly equivalent to
$x\in B_{\|z^*\|}(z^*)$, which completes the proof of the lemma.
\end{proof}

The following geometrical fact reveals a relationship between extreme points of the set $\ext(C)\cup\{0\}$ and its convexification denoted by ``conv".

\begin{lemma}{\rm(extreme points under convexification).}\label{cut:away}
For any set $C\in\mc{CC}(\R^d)$ we have the inclusion
$$
\ext\big(\co(C\cup\{0\})\big)\subset\ext(C)\cup\{0\}.
$$
\end{lemma}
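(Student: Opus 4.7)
The plan is to start by giving a concrete parametrization of $D:=\co(C\cup\{0\})$. Because $C$ is already convex, any point of $D$ can be written as $\lambda y$ with $y\in C$ and $\lambda\in[0,1]$: a finite convex combination $\sum_i \mu_i c_i+\mu_0\cdot 0$ of elements of $C\cup\{0\}$ collapses — setting $\lambda:=\sum_{i\ge 1}\mu_i\in[0,1]$, the $C$-part $\sum_i\mu_i c_i$ equals $\lambda\bar c$ for some $\bar c\in C$ by convexity of $C$. Thus $D$ is the union of the segments $[0,y]$ for $y\in C$, and this simple description is the main engine of the proof.

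Given this parametrization, I would take an arbitrary $p\in\ext(D)$ and try to place it in $\ext(C)\cup\{0\}$. Writing $p=\lambda y$ with $y\in C$ and $\lambda\in[0,1]$, the case $\lambda\in(0,1)$ is immediately ruled out by extremality: it displays $p$ as a proper convex combination of the two distinct points $0\in D$ and $y\in D$. This leaves $\lambda=0$, which gives $p=0$, or $\lambda=1$, which gives $p=y\in C$.

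In the remaining case $p\in C$, I would use the inclusion $C\subset D$ to transfer extremality from $D$ down to $C$: any representation $p=\tfrac{1}{2}(u+v)$ with $u,v\in C$ lies entirely in $D$ and, by extremality of $p$ in $D$, forces $u=v=p$. Hence $p\in\ext(C)$, completing the desired inclusion.

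I do not anticipate a serious obstacle here; the only mildly delicate point is the opening parametrization of $D$, which relies on convexity of $C$ to fold the $C$-part of an arbitrary convex combination back into a single scalar multiple of a point in $C$. Everything after that is a direct consequence of the definition of extreme point together with $\{0\}\cup C\subset D$. Lemma~\ref{ext:nonempty} is not needed for this statement — it will presumably be invoked later together with this inclusion to guarantee that the ``extreme point'' specification of the mapping $G_F$ is nonvacuous.
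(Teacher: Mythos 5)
Your proof is correct and follows essentially the same route as the paper's: both parametrize $\co(C\cup\{0\})$ as $\{\lambda y:\,y\in C,\ \lambda\in[0,1]\}$, split into the cases $\lambda=0$, $\lambda\in(0,1)$, $\lambda=1$, and in the last case transfer extremality from the hull down to $C$ via the inclusion $C\subset\co(C\cup\{0\})$. The only difference is cosmetic: you justify the parametrization explicitly (collapsing the $C$-part of a convex combination using convexity of $C$), which the paper takes for granted, and you use the midpoint form of extremality instead of a general $\lambda\in(0,1)$ combination.
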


\begin{proof}
Pick any $x\in\ext(\co(C\cup\{0\}))$ and find by definition a point $z\in C$ and a number $\mu\in[0,1]$ such that $x=\mu z$.
If $\mu=0$, then we get $x=0$, and thus the claimed statement holds. If $\mu\in(0,1)$, then the point $x=\mu z=\mu z+(1-\mu)0$ is not extreme for $\co(C\cup\{0\})$, which also verifies the statement of the lemma. It remains to examine the case where $\mu=1$. In this case we have $x\in C$.
If $x\notin\ext(C)$, then there exist $\lambda\in(0,1)$ and $z_1,z_2\in C$ with $x=\lambda z_1+(1-\lambda)z_2$ and $z_1\ne z_2$. Since $z_1,z_2\in\co(C\cup\{0\})$, it tells us that $x\notin\ext(\co(C\cup\{0\}))$ and thus completes the proof of this lemma.
\end{proof}

The last lemma of this section is based on the previous ones while providing a certain counterpart of Lemma~\ref{play:with:balls} under convexification. It actually shows that the extreme points of the set $C$ therein, which are facing towards zero, can be {\em omitted} in the statement of Lemma \ref{play:with:balls}.

\begin{lemma}{\rm(ball covers generated by outward facing extreme points)}\label{outward:facing} Take $C\in\mc{CC}(\R^d)$ and $x\in B_{\|z\|}(z)$ with some $z\in C$. Then there exists an extreme point $z^*\in\ext(C)\cap\ext(\co(C\cup\{0\}))$ such that we have $x\in B_{\|z^*\|}(z^*)$.
\end{lemma}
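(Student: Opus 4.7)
The plan is to reduce the claim directly to the two preceding lemmas, applied to the enlarged set $\tilde C:=\co(C\cup\{0\})$, which belongs to $\mc{CC}(\R^d)$ since $C$ does. By hypothesis there is some $z\in C\subset\tilde C$ with $x\in B_{\|z\|}(z)$, so the assumption of Lemma~\ref{play:with:balls} is satisfied for $\tilde C$. That lemma produces a point $z^*\in\ext(\tilde C)$ with $x\in B_{\|z^*\|}(z^*)$. Lemma~\ref{cut:away} then tells us that $z^*\in\ext(C)\cup\{0\}$, so we only have to promote $z^*$ to a point of $\ext(C)\cap\ext(\tilde C)$.

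In the generic case $z^*\ne 0$ we immediately get $z^*\in\ext(C)$ from Lemma~\ref{cut:away}, and combined with $z^*\in\ext(\tilde C)$ this already yields a point in the required intersection which by construction covers $x$. This is the content of the lemma for all nondegenerate configurations.

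The only potential obstacle is the degenerate case $z^*=0$. Here $x\in B_{\|0\|}(0)=\{0\}$ forces $x=0$, so the condition $x\in B_{\|z^{**}\|}(z^{**})$ becomes trivial (it holds for any $z^{**}\in\R^d$), and the task reduces to showing that $\ext(C)\cap\ext(\tilde C)$ is nonempty. I would argue as follows: by Lemma~\ref{ext:nonempty}, $\ext(\tilde C)\ne\emptyset$. If $\ext(\tilde C)$ contains any nonzero element, that element lies in $\ext(C)$ by Lemma~\ref{cut:away}, and we are done. Otherwise $\ext(\tilde C)=\{0\}$, and the Krein--Milman theorem (or the fact that a compact convex set is the convex hull of its extreme points) forces $\tilde C=\{0\}$, hence $C=\{0\}$, so that $\ext(C)=\{0\}=\ext(\tilde C)$ and the intersection contains $0$, which serves as our $z^{**}$.

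I do not expect any serious difficulty; the content of the lemma is essentially a one-line combination of Lemmas~\ref{play:with:balls} and \ref{cut:away}, with the only care needed in the bookkeeping around $z^*=0$ outlined above.
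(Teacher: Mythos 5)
Your proof is correct and follows essentially the same route as the paper's: apply Lemma~\ref{play:with:balls} to $\co(C\cup\{0\})$, use Lemma~\ref{cut:away} to place the resulting extreme point in $\ext(C)\cup\{0\}$, and split on whether it is zero. Your handling of the degenerate case is in fact slightly more careful than the paper's, which there only invokes Lemma~\ref{ext:nonempty} to produce some $z^*\in\ext(C)$ without explicitly checking the second required membership $z^*\in\ext\big(\co(C\cup\{0\})\big)$; your dichotomy (either $\ext\big(\co(C\cup\{0\})\big)$ contains a nonzero point, which then lies in $\ext(C)$, or else $C=\{0\}$) supplies exactly that missing check.
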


\begin{proof}
Consider the set $\widehat{C}:=\co(C\cup\{0\})\in\mc{CC}(\R^d)$. Since we know that $x\in B_{\|z\|}(z)$ for some $z\in C\subset\widehat{C}$,
applying Lemma~\ref{play:with:balls} to $x$, $z$ and the set $\hat{C}$ together with the usage of Lemma~\ref{cut:away} tells us that $x\in B_{\|\hat{z}\|}(\hat{z})$ for some
\[\hat{z}\in\ext(\widehat{C})=\ext(\co(C\cup\{0\}))\subset\ext(C)\cup\{0\}.\]
If $\hat{z}\ne 0$, then $\hat{z}\in\ext(C)$, and the claimed statement is fulfilled with $z^*:=\hat{z}$. If $\hat{z}=0$, then we have $x=0$.
Employing Lemma~\ref{ext:nonempty} yields $z^*\in\ext(C)$, which verifies therefore that $x\in B_{\|z^*\|}(z^*)$ while completing the proof.
\end{proof}

Applying now Lemma~\ref{outward:facing} to the problem in question leads us to the following specification of the images of the mapping $G_F(x,\bar{y})$ for each fixed $\bar y\in\R^d$ and hence of the preimages of $F$ in the calculation formula of Theorem~\ref{main:thm}. This result shows that for such a calculation we may use {\em only extreme points} of $F(x)$ {\em facing away} from $\bar{y}$.

\begin{theorem}{\rm(specified calculation formula via extreme points).}\label{spec} For any set-valued mapping $F:\R^d\to\mc{CC}(\R^d)$  satisfying the relaxed one-sided Lipschitz property with a negative Lipschitz constant $\ell<0$, any $x\in\R^d$, and any $\bar{y}\in\R^d$, we have the representation
\begin{align*}
G_F(x,\bar{y})=\bigcup_{y\in\ext(F(x))\cap\ext(\co(F(x)\cup\{\bar{y}\}))}
B\Big(x+\frac{1}{2\ell}(\bar{y}-y),\frac{1}{2|\ell|}\|\bar{y}-y\|\Big).
\end{align*}
\end{theorem}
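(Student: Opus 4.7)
The $\supset$ inclusion is immediate, since every element of $\ext(F(x))\cap\ext(\co(F(x)\cup\{\bar y\}))$ lies in $F(x)$ and the right-hand side is a subunion of the union defining $G_F(x,\bar y)$. My plan for the nontrivial $\subset$ direction is to reduce the geometric condition defining $G_F(x,\bar y)$ to the exact hypothesis of Lemma~\ref{outward:facing} by one affine change of variables, apply that lemma, and then transport the resulting extreme point back through the inverse substitution to obtain the desired $y^*$.

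Concretely, I would fix $w\in G_F(x,\bar y)$ and pick $y\in F(x)$ with $\|w-x-\tfrac{1}{2\ell}(\bar y-y)\|\le\tfrac{1}{2|\ell|}\|\bar y-y\|$. Setting $c:=w-x$ and using $\ell<0$ to rewrite $\tfrac{1}{2\ell}=-\tfrac{1}{2|\ell|}$, this becomes $\|(-c)-u\|\le\|u\|$, i.e., $-c\in B_{\|u\|}(u)$, where $u:=\tfrac{1}{2|\ell|}(\bar y-y)$. Introducing the compact convex set $C:=\tfrac{1}{2|\ell|}(\bar y-F(x))\in\mc{CC}(\R^d)$, we have $u\in C$, so Lemma~\ref{outward:facing} (applied with $-c$ in place of $x$ and $C$ in place of $C$) yields a point $u^*\in\ext(C)\cap\ext(\co(C\cup\{0\}))$ such that $-c\in B_{\|u^*\|}(u^*)$.

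To finish, I would transport $u^*$ back via the affine bijection $T:u\mapsto\bar y-2|\ell|u$, whose inverse is the substitution used above. Because $T(C)=F(x)$ and $T(0)=\bar y$, we have $T(\co(C\cup\{0\}))=\co(F(x)\cup\{\bar y\})$, and since affine bijections preserve extreme points, $y^*:=T(u^*)=\bar y-2|\ell|u^*$ lies in $\ext(F(x))\cap\ext(\co(F(x)\cup\{\bar y\}))$. Reversing the substitution in $\|(-c)-u^*\|\le\|u^*\|$ then returns $\|w-x-\tfrac{1}{2\ell}(\bar y-y^*)\|\le\tfrac{1}{2|\ell|}\|\bar y-y^*\|$, so $w$ belongs to the union on the right-hand side.

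The creative content is entirely in Lemma~\ref{outward:facing}, already at our disposal; the task here is just the reduction. The only place where I expect to need care is the sign bookkeeping forced by $\ell<0$ (so that the ball defining $G_F$ actually takes the normalized form $B_{\|u\|}(u)$ demanded by Lemma~\ref{outward:facing}) and the verification that $T$ does send $\co(C\cup\{0\})$ onto $\co(F(x)\cup\{\bar y\})$; both steps are routine but easy to slip up on, and no further analytic estimates are required.
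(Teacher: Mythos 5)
Your proposal is correct and follows essentially the same route as the paper: both reduce the claim to Lemma~\ref{outward:facing} applied to the affinely transformed set $\tfrac{1}{2|\ell|}(\bar y-F(x))$ (the paper uses $\tfrac{1}{2\ell}(\bar y-F(x))$, a harmless sign difference) and then transport the extreme point back, using that affine bijections preserve extreme points and convex hulls, before shifting by $x$. The only cosmetic difference is that you argue pointwise for a fixed $w\in G_F(x,\bar y)$, whereas the paper phrases the same reduction as an inclusion of unions of balls.
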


\begin{proof}
The inclusion ``$\supset$" is trivial. Let us verify the converse inclusion therein. Applying Lemma~\ref{outward:facing} to the set $C:=\frac{1}{2\ell}(\bar{y}-F(x))$ yields
\begin{equation}\label{to:be:shifted}
\bigcup_{z\in C}B_{\|z\|}(z)\subset\bigcup_{z\in\ext(C)\cap\ext(\co(C\cup\{0\}))}
B_{\|z\|}(z).
\end{equation}
It can  be directly checked that
\begin{align*}
\bigcup_{z\in C}B_{\|z\|}(z)
=\bigcup_{z\in\frac{1}{2\ell}(\bar{y}-F(x))}B_{\|z\|}(z)
=\bigcup_{y\in F(x)}B\Big(\frac{1}{2\ell}(\bar{y}-y),\frac{1}{2|\ell|}\|\bar{y}-y\|\Big).
\end{align*}
In addition we have the equalities
\begin{align*}
&\ext(C)\cap\ext\big(\co(C\cup\{0\})\big)\\
&=\ext\Big(\frac{1}{2\ell}(\bar{y}-F(x))\Big)
\cap\ext\Big(\co\big(\frac{1}{2\ell}(\bar{y}-F(x))\cup\{0\}\big)\Big)\\
&=\frac{1}{2\ell}\Big(\ext\big(\bar{y}-F(x)\Big)
\cap\ext\Big(\co\big((\bar{y}-F(x))\cup\{0\})\big)\Big)\\
&=\frac{1}{2\ell}\Big(\bar{y}-\Big\{\ext\big(F(x)\big)
\cap\ext\big(\co(F(x)\cup\{\bar{y}\})\big)\Big\}\Big).
\end{align*}
Therefore, the following relationship is satisfied:
\begin{align*}
&\bigcup_{z\in\ext(C)\cap\ext(\co(C\cup\{0\}))}B_{\|z\|}(z)\\
&=\bigcup_{y\in\ext(F(x))\cap\ext(\co(F(x)\cup\{\bar{y}\}))}
B\Big(\frac{1}{2\ell}(\bar{y}-y),\frac{1}{2|\ell|}\|\bar{y}-y\|\Big).
\end{align*}
Combining all the above and shifting inclusion \eqref{to:be:shifted} by $x$, we arrive at the claimed statement of the theorem.
\end{proof}

\section{Conclusions}\label{conc}

The results of this paper provide useful formulas to calculate preimages of points under set-valued mappings satisfying the relaxed one-sided Lipschitz property with negative Lipschitz constants. The importance of such mappings has been recognized in variational analysis, optimization, optimal control, and stability theory for differential equations and inclusions, while this property is largely underinvestigated. In particular, regularity and the topological features of the preimages of relaxed one-sided Lipschitz mappings with negative Lipschitz constants are almost completely unexplored. The explicit geometric characterization obtained in this paper is only a first step in this direction, and much more work should be done in the future. Note furthermore that, in contrast to well-developed generalized differential calculus for various compositions involving preimages of sets under Lipschitz continuous multifunctions and the like (see, e.g., \cite{rw,Mor1,m18}), nothing at all is known in the case of mappings satisfying the relaxed one-sided Lipschitzian condition considered in our paper. This definitely is a challenging issue of variational analysis with many potential applications to optimization, control, and other areas.

\subsection*{Acknowledgements}
Research of the second author was partly supported by the USA National Science Foundation under grants DMS-1512846 and DMS-1808978, by the USA Air Force Office of Scientific Research under grant \#15RT0462, and by the Australian Research Council under Discovery Project DP-190100555.

\end{document}